\documentclass[11pt, a4paper]{article}

\usepackage{amsmath, amssymb, amsthm}
\usepackage{geometry}
\usepackage[T1]{fontenc}
\usepackage{lmodern}
\usepackage{indentfirst}
\usepackage{xcolor}
\usepackage[colorlinks=true, linkcolor=blue, citecolor=red, urlcolor=blue]{hyperref}

\usepackage{tikz}
\usetikzlibrary{shapes.geometric, arrows.meta, positioning, fit, backgrounds}

\providecommand{\citeauthoryear}[3]{}

\newtheorem{theorem}{Theorem}[section]
\newtheorem{lemma}[theorem]{Lemma}
\newtheorem{corollary}[theorem]{Corollary}
\newtheorem{definition}[theorem]{Definition}
\newtheorem{remark}[theorem]{Remark}

\begin{document}

\title{\textbf{A Container Theorem for General Digraphs \\ via Pair-State Encoding}}

\author{
    Meili Liang, Yue Guan, Ruiling Zheng, Jianxi Liu\thanks{Corresponding author. Email: jxliu@gdufs.edu.cn} \\
    \small School of Mathematics and Statistics, Guangdong University of Foreign Studies, Guangzhou, China.
}

\date{} 

\maketitle

\begin{abstract}
In a seminal work, K\"uhn, Osthus, Townsend, and Zhao used the hypergraph container method to determine the typical structure of oriented graphs avoiding a fixed tournament or cycle. Direct extensions of their theorem to general digraphs fail due to dense local obstructions like 2-cycles, which cause a co-degree explosion (a division-by-zero bottleneck) in the standard directed-edge hypergraph model. 

In this paper, we completely resolve this bottleneck by introducing a pair-state encoding framework. By shifting the ground set of the hypergraph from directed edges to unordered vertex pairs endowed with directional states, we structurally guarantee that any subgraph consisting of at least two pairs spans at least three vertices. This bypasses the division-by-zero obstacle entirely and yields a universal container theorem for all digraphs without requiring any artificial sparsity conditions. As applications, we obtain optimal asymptotic counting results for $H$-free digraphs for any general forbidden digraph $H$.

\medskip\noindent
\textbf{Keywords:} container method, digraph, forbidden subgraph, 2-cycles, typical structure \\
\textbf{2020 Mathematics Subject Classification:} 05C20, 05C35, 05C30, 05C65.
\end{abstract}

\section{Introduction}\label{sec1}

The problem of enumerating and describing the typical structure of graphs with forbidden subgraphs has a long history in extremal combinatorics. For directed graphs, substantial progress was made by K\"uhn, Osthus, Townsend and Zhao \cite{KOTZ} (hereafter KOTZ), who developed a container theorem for \emph{oriented graphs} (digraphs with no 2-cycles). 

As noted in \cite{KOTZ}, their container theorem cannot be directly extended to general digraphs using the standard edge-based uniform hypergraph method. The obstruction comes from the high local density of 2-cycles. In the standard application of the hypergraph container method to graphs or oriented graphs \cite{BaloghMorrisSamotij, SaxtonThomason}, the ground set consists of the $N(N-1)$ directed edges. If the forbidden digraph $H$ contains a 2-cycle, this 2-cycle constitutes a sub-configuration $H'$ with $e(H') = 2$ directed edges but only $v(H') = 2$ vertices. The crucial maximum density parameter $m(H)$ in the hypergraph container method requires bounding expressions of the form $\frac{e(H')-1}{v(H')-2}$. When $v(H') = 2$, this leads to a division-by-zero ($2-2=0$) and causes the co-degree estimates to explode, forcing the container penalty factor to become trivial.

To visually illustrate this bottleneck, Figure~\ref{fig:comparison} contrasts the graph classes handled by these frameworks. While the standard directed-edge model (e.g., KOTZ 2017) is strictly limited to oriented graphs where 2-cycles are prohibited (left), our approach universally accommodates general digraphs (right). The highlighted 2-cycle represents the dense local obstruction that triggers the fatal division-by-zero issue in the classical uniform hypergraph setup.

\begin{figure}[htbp]
    \centering
    \resizebox{0.85\textwidth}{!}{%
    \begin{tikzpicture}[>=stealth, node distance=2cm, thick]
        \node[circle, draw, minimum size=0.6cm] (A1) at (0, 0) {};
        \node[circle, draw, minimum size=0.6cm] (B1) at (2, 0) {};
        \node[circle, draw, minimum size=0.6cm] (C1) at (1, 1.732) {};
        
        \draw[->] (A1) -- (B1);
        \draw[->] (B1) -- (C1);
        \draw[->] (A1) -- (C1);
        
        \node[align=center] at (1, -1) {\textbf{Oriented Graphs}\\ (KOTZ 2017)};
        \node[align=center, text=gray, font=\small] at (1, -1.6) {No 2-cycles allowed};

        \draw[loosely dashed, gray] (2.8, 2.5) -- (2.8, -2);

        \node[circle, draw, minimum size=0.6cm] (A2) at (4, 0) {};
        \node[circle, draw, minimum size=0.6cm] (B2) at (6, 0) {};
        \node[circle, draw, minimum size=0.6cm] (C2) at (5, 1.732) {};
        
        \draw[->, bend left=20] (A2) to (B2);
        \draw[->, bend left=20] (B2) to (A2);
        
        \draw[->] (B2) -- (C2);
        \draw[->] (C2) to (A2);
        
        \draw[densely dashed, rounded corners] (3.6, -0.5) rectangle (6.4, 0.5);
        \node[font=\footnotesize] at (8.5, 0) {Local Obstruction: 2-cycle};
        
        \node[align=center] at (5, -1) {\textbf{General Digraphs}\\ (Our Work)};
        \node[align=center, text=gray, font=\small] at (5, -1.6) {Universal: Contains 2-cycles};
    \end{tikzpicture}%
    }
    \caption{Comparison of graph classes handled by the hypergraph container methods. The standard directed-edge model (KOTZ 2017) fails when 2-cycles are present due to a division-by-zero bottleneck.}
    \label{fig:comparison}
\end{figure}

To resolve the constraints imposed by multi-edge densities and state-dependent adjacencies, Falgas-Ravry, O'Connell, and Uzzell \cite{FalgasRavry} developed a foundational framework known as the \emph{multicolour container theory}. They generalized the classic monochromatic container theorems \cite{BaloghMorrisSamotij, SaxtonThomason} by allowing the underlying elements of the hypergraph to carry multiple categorical colors or states. By integrating this multicolour container lemma with an entropy-based framework on decorated graphons, they successfully characterized general hereditary properties and established robust counting results for diverse structures, including general digraphs, tournaments, and multigraphs with bounded multiplicities. 

Our approach shares a profound philosophical insight with the work of Falgas-Ravry et al.\ \cite{FalgasRavry} regarding the elimination of the local density obstruction. Both frameworks recognize that directed edges cannot serve as independent entities in the hypergraph ground set when dense 2-cycles are forbidden. Consequently, both approaches build upon the geometric foundation of \emph{unordered vertex pairs} coupled with relational states (or colors). Because any sub-configuration consisting of at least two distinct unordered pairs structurally spans at least 3 vertices, the quantity $v(H')-2 \ge 1$ remains strictly positive, effectively neutralizing the co-degree explosion bottleneck.

However, significant distinctions arise in the theoretical abstraction, the complexity of implementation, and the formulation of density parameters. The framework by Falgas-Ravry et al.\ \cite{FalgasRavry} adopts a top-down, highly abstract pathway heavily rooted in measure theory and continuous graphon limits, where the asymptotic bounds are implicitly expressed through variational optimization of extremal entropy. While theoretically elegant and broad, it often requires substantial machinery to evaluate for specific graph properties. In contrast, the present work introduces a bottom-up, purely combinatorial route. Instead of resetting the foundation to establish a new multicolour container lemma, we invent a tactical map that translates the multi-state relational constraints back into the classical monochromatic, uniform hypergraph model of Saxton and Thomason \cite{SaxtonThomason}. This is achieved seamlessly by expanding the ground set to $U = X \times \{F, B, D\}$ and enforcing a unique \emph{downward-closure property} on the encoded representation. 

To illustrate this bottleneck and our resolution concretely, consider the complete digraph on three vertices, denoted by $DK_3$, which contains all three possible 2-cycles (amounting to six directed edges). Under the classical directed-edge hypergraph framework of KOTZ \cite{KOTZ}, any attempt to build containers for $DK_3$-free digraphs immediately encounters a fatal barrier. A sub-configuration $H' \subseteq DK_3$ consisting of a single 2-cycle yields $v(H') = 2$ and $e(H') = 2$, forcing the standard density denominator $v(H') - 2$ to become zero. This division-by-zero causes the co-degree estimates to explode and renders the classical container theorem completely inapplicable to $DK_3$. In contrast, our pair-state encoding framework seamlessly accommodates $DK_3$. Here, the forbidden digraph has $p(DK_3) = 3$ underlying unordered pairs, which satisfies our basic requirement $p(H) \ge 2$. For any sub-configuration $H' \subseteq DK_3$ with $p(H') \ge 2storage$, any two distinct unordered pairs structurally must span at least $s \ge 3$ vertices. This geometric guarantee ensures that $v(H') - 2 \ge 1$ is strictly positive for all valid sub-configurations. Consequently, the pair-density parameter is perfectly well-defined and finite, evaluated as $m_p(DK_3) = \max \{ \frac{2-1}{3-2}, \frac{3-1}{3-2} \} = 2$. Applying Theorem~\ref{thm:main}, we immediately obtain a sharp container upper bound of $\log |\mathcal{C}| \le c N^{3/2} \log N$, demonstrating that our bottom-up framework successfully tames dense local obstructions without modifying the underlying monochromatic hypergraph machinery.

Furthermore, we define a highly explicit and intuitive graph theoretic parameter---the \emph{pair-density} $m_p(H)$ (Definition \ref{def:mp}). This parameter precisely captures the polynomial growth rate of co-degrees under the state encoding, allowing us to directly establish the container size bound $O(N^{2-1/m_p(H)}\log N)$ in a sharp, clear, and constructive manner. Thus, our method provides an algorithmic and transparent alternative for analyzing general forbidden subdigraphs.

\begin{definition}[Pair-size and Pair-density]\label{def:mp}
For a digraph $H$, let $p(H)$ denote the number of unordered pairs of vertices in $H$ that contain at least one directed edge. We define the pair-density parameter $m_p(H)$ as:
\[
m_p(H) = \max_{H' \subseteq H, p(H') \ge 2} \frac{p(H') - 1}{v(H') - 2}.
\]
\end{definition}

\begin{remark}
Since any graph with $p(H') \ge 2$ distinct edges on unordered pairs requires at least 3 vertices (the maximum number of pairs on 2 vertices is $\binom{2}{2}=1$), the denominator $v(H') - 2 \ge 1$ is always strictly positive. Thus, $m_p(H)$ is well-defined and finite for all digraphs with $p(H) \ge 2$.
\end{remark}

Our main result is the following universal container theorem for digraphs.

\begin{theorem}[Container Theorem for General Digraphs]\label{thm:main}
Let $H$ be a general digraph with $p(H) \ge 2$ and let $a \ge 1$. For every $\varepsilon > 0$, there exists $c > 0$ such that for all sufficiently large $N$, there exists a collection $\mathcal{C}$ of digraphs (containers) on vertex set $[N]$ with the following properties:
\begin{enumerate}
    \item[(a)] Every $H$-free digraph $I$ on $[N]$ is a subgraph of some $G \in \mathcal{C}$.
    \item[(b)] Every $G \in \mathcal{C}$ contains at most $\varepsilon N^{v(H)}$ copies of $H$, and $e_a(G) \le \operatorname{ex}_a(N, H) + \varepsilon N^2$.
    \item[(c)] $\log |\mathcal{C}| \le c N^{2-1/m_p(H)} \log N$.
\end{enumerate}
\end{theorem}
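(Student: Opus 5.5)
We encode digraphs on $[N]$ as subsets of the ground set $U = X \times \{F,B,D\}$, where $X=\binom{[N]}{2}$ is the set of unordered pairs and, for a pair $\{u,v\}$ with $u<v$, the three states $F,B,D$ stand for ``only $uv$'', ``only $vu$'' and ``both''. A digraph $G$ corresponds to the set $\phi(G)\subseteq U$. In $\phi(G)$, a pair carrying a single arc contributes the matching state, and a pair carrying a 2-cycle contributes all three states $F,B,D$. This downward-closed convention makes $G\subseteq G'$ equivalent to $\phi(G)\subseteq\phi(G')$. It also lets us decode any $T\subseteq U$ back to the digraph $\psi(T)$ whose arcs are those implied by the states present.

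Next we build a $p(H)$-uniform hypergraph $\mathcal{H}$ on $U$. Each copy of $H$ in the complete digraph on $[N]$ gives one edge per choice of states on its $p(H)$ pairs. For each pair used by the copy we take the minimal state consistent with $H$: $F$ or $B$ if $H$ has a single arc there, and $D$ if $H$ has a 2-cycle there. A set $I'\subseteq U$ is then $\mathcal{H}$-independent essentially iff $\psi(I')$ is $H$-free. For an $H$-free $I$, the set $\phi(I)$ is independent, because a $D$-state occurs only when both arcs are present.

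We then apply the Saxton--Thomason container theorem, in its iterated form with supersaturation, to $\mathcal{H}$ with $|U|=3\binom N2=\Theta(N^2)$ and $e(\mathcal{H})=\Theta(N^{v(H)})$. The average degree is $d=\Theta(N^{v(H)-2})$. The main work is the co-degree computation. For $2\le j\le p(H)$, a set $\sigma$ of $j$ elements of $U$ lies in an edge only if the $j$ underlying pairs are distinct. Those pairs then form a sub-configuration $H'\subseteq H$ with $p(H')=j$ on $v(H')\ge 3$ vertices, so $d_j(\sigma)\le O(N^{v(H)-v(H')})$. Choosing $\tau = N^{-1/m_p(H)}$ with a large constant, the Saxton--Thomason condition $\delta(\mathcal{H},\tau)\le\varepsilon$ reduces to
\[
\frac{N^{v(H)-v(H')}}{\tau^{j-1} N^{v(H)-2}} = N^{-(v(H')-2)+(p(H')-1)/m_p(H)} \le O(1).
\]
This holds precisely by the definition of $m_p(H)$, and the denominator $v(H')-2\ge1$ is guaranteed by the Remark. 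Each round therefore costs $O(\tau\log(1/\tau)|U|)=O(N^{2-1/m_p}\log N)$ bits of fingerprint. We iterate while the container still spans at least $\varepsilon N^{v(H)}$ copies, i.e.\ while $\mathcal{H}[C]$ has $\ge \varepsilon' e(\mathcal{H})$ edges. This needs $O(\log(1/\varepsilon))$ rounds, or $O(1)$ if we use the ``$\varepsilon e(\mathcal H)$ edges'' version directly, which gives (c) and the first half of (b) after decoding with $\psi$. Property (a) follows from independence of $\phi(I)$ and monotonicity of $\psi$.

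I expect the main obstacle to be the edge bound $e_a(G)\le \operatorname{ex}_a(N,H)+\varepsilon N^2$. For this I would invoke a digraph removal lemma, obtained from the directed (or coloured) regularity lemma. A digraph with at most $\varepsilon' N^{v(H)}$ copies of $H$ can be made $H$-free by deleting $\varepsilon N^2$ arcs. The weighted count $e_a$ drops by at most $a\varepsilon N^2$ under such deletions, so $e_a(G)\le \operatorname{ex}_a(N,H)+a\varepsilon N^2$, and rescaling $\varepsilon$ finishes. A secondary subtlety is checking that each copy of $H$ in $\psi(C)$ actually produces an edge of $\mathcal{H}[C]$, so that the copy count transfers. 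This is exactly where downward closure is needed: if a pair is in state $D$, then $F$ and $B$ are also present.
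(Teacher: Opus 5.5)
There is a genuine gap at the decoding step, and it is exactly the ``secondary subtlety'' you flag. Downward closure is a property of the encodings $\phi(I)$, not of the containers. The sets $C$ returned by Saxton--Thomason are arbitrary subsets of $U$. Suppose $(e,F),(e,B)\in C$ but $(e,D)\notin C$. Then $\psi(C)$ has a 2-cycle on $e$, and copies of $H$ in $\psi(C)$ that place a 2-cycle of $H$ on $e$ have no corresponding hyperedge in $\mathcal H[C]$. The opposite defect, $(e,D)\in C$ without both $(e,F)$ and $(e,B)$, is harmless: you can discard such elements, since no $\phi(I)$ needs them. The first defect is not harmless. Neither the copy count nor the $e_a$ bound transfers from $\mathcal H[C]$ to $\psi(C)$. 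Your claim that $I'$ is independent ``essentially iff'' $\psi(I')$ is $H$-free fails for the same reason.

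This cannot be repaired while the containers remain digraphs. If $H$ has a 2-cycle, every oriented graph on $[N]$ is $H$-free. A digraph $G$ has exactly $3^{f_2(G)}2^{f_1(G)}$ oriented subgraphs, so covering all $3^{\binom N2}$ oriented graphs by $2^{o(N^2)}$ digraphs forces some $G\in\mathcal C$ with $f_2(G)\ge\binom N2-o(N^2)$. Such a $G$ contains $\Omega(N^{v(H)})$ copies of $H$, so (b) is unattainable for digraph containers. This is precisely the case that goes beyond oriented $H$; for oriented $H$ your argument does go through.

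The paper's proof avoids your $\psi$ for this reason. It decodes $C^*$ into a state profile $\Sigma_e\subseteq\{\emptyset,F,B,D\}$ and permits $D$ only when $(e,F),(e,B),(e,D)$ all lie in $C^*$. Then every copy of $H$ in a digraph drawn from the profile is a hyperedge of $\mathcal H[C^*]$. It measures $e_a$ as the sum over pairs of the largest permitted weight. In effect its containers are profiles, not digraphs, and a correct version of the statement has to allow this.

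With that decoding, the rest of your proposal works:
\begin{itemize}
\item Your removal-lemma argument, applied to the maximum-weight digraph drawn from the profile, yields the $e_a$ bound. It is a cleaner substitute for the paper's graphon-compactness argument.
\item Your co-degree computation coincides with the paper's.
\item Your form of the Saxton--Thomason hypothesis is the right one: $\delta(\mathcal H,\tau)$ small, obtained via a large constant in $\tau$. The paper's condition $\delta\le\varepsilon/(12\tau)$ is vacuous when $\delta=O(1)$ and appears to be a misstatement of the original.
\end{itemize}
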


As a direct consequence of Theorem \ref{thm:main}, we obtain an asymptotic enumeration result for $H$-free digraphs for any general forbidden digraph $H$, filling a gap that previously required specialized sparsity conditions.

\begin{corollary}[Asymptotic Counting of $H$-Free Digraphs]\label{cor:counting}
Let $H$ be a general digraph with $p(H) \ge 2$. The total number of $H$-free digraphs on $N$ vertices, denoted by $f^*(N, H)$, satisfies
\[
\log_2 f^*(N, H) = \operatorname{ex}_2(N, H) + o(N^2).
\]
\end{corollary}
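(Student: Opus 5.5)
The plan is to derive Corollary~\ref{cor:counting} from Theorem~\ref{thm:main} by the standard container counting argument, in the form used by KOTZ \cite{KOTZ}. Here $\operatorname{ex}_2(N,H)$ is the maximum of $e_2(G)=\log_2 w(G)$ over $H$-free digraphs $G$ on $[N]$. We encode a digraph on an unordered pair by its state: empty, one of the two single arcs, or the double edge. The weight $w(G)$ is the product over pairs of the number of subsets of the arcs of $G$ on that pair. Thus a pair with a double edge contributes factor $4$ (exponent $2$), and a single arc contributes factor $2$ (exponent $1$). In particular $2^{e_2(G)}$ is exactly the number of subgraphs of $G$.

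\textbf{Lower bound.} Take an extremal $H$-free digraph $G$ attaining $\operatorname{ex}_2(N,H)$. Every spanning subgraph of $G$ is again $H$-free, and distinct subgraphs give distinct labelled digraphs. Hence $f^*(N,H)\ge 2^{e_2(G)}=2^{\operatorname{ex}_2(N,H)}$.

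\textbf{Upper bound.} Fix $\varepsilon>0$ and apply Theorem~\ref{thm:main} with $a=2$. By part~(a), every $H$-free digraph is a subgraph of some container $G\in\mathcal C$. Therefore
\[
f^*(N,H)\le\sum_{G\in\mathcal C}2^{e_2(G)}\le|\mathcal C|\,2^{\operatorname{ex}_2(N,H)+\varepsilon N^2}.
\]
By part~(c), $\log_2|\mathcal C|=O(N^{2-1/m_p(H)}\log N)$. Since $m_p(H)$ is finite and positive when $p(H)\ge 2$, the exponent $2-1/m_p(H)$ is strictly less than $2$, so this term is $o(N^2)$. Taking logarithms and letting $\varepsilon\to0$ gives $\log_2 f^*(N,H)\le \operatorname{ex}_2(N,H)+o(N^2)$.

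\textbf{Main obstacle.} The only delicate step is the identification used in the upper bound. We need that the number of subgraphs of a container $G$ is exactly $2^{e_2(G)}$, and that the bound $e_a(G)\le \operatorname{ex}_a(N,H)+\varepsilon N^2$ in part~(b), specialised to $a=2$, is a statement about this same quantity. If the paper's $e_a$ is defined with a different weighting, for instance weight $\log_2 3$ on a double pair, then I would first check that the specialisation $a=2$ really matches subgraph counting. This amounts to checking that $\log_2$ of the number of subgraphs of $G$ is additive over the pair-states. Everything else in the argument is routine.
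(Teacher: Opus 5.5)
Your outline has the same shape as the paper's proof: a trivial lower bound from subgraphs of an extremal digraph, and an upper bound by summing $2^{e_2}$ over containers. The lower bound is correct, and so is the identity ``number of subgraphs of a digraph $G$ equals $2^{e_2(G)}$''. The gap is that the upper bound uses Theorem~\ref{thm:main} with containers that are genuine digraphs satisfying (a) and (b) at once. When $H$ contains a 2-cycle no such family exists, and the corollary itself is false in that case. Every oriented graph is then $H$-free, so $\log_2 f^*(N,H)\ge \binom{N}{2}\log_2 3\approx 0.79N^2$. For $H=DK_3$, the paper's own example, a digraph is $H$-free iff its 2-cycles form a triangle-free graph, so Mantel gives $\operatorname{ex}_2(N,DK_3)=\binom{N}{2}+\lfloor N^2/4\rfloor\approx 0.75N^2$. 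For $H$ consisting of two 2-cycles sharing a vertex ($p(H)=2$, $m_p(H)=1$) one even has $\operatorname{ex}_2(N,H)=\binom{N}{2}+\lfloor N/2\rfloor$. You can also see the failure of (a)--(c) directly. If $2^{o(N^2)}$ digraph containers cover all $3^{\binom{N}{2}}$ oriented graphs, some container $G$ has $3^{f_2(G)}2^{f_1(G)}\ge 3^{\binom{N}{2}-o(N^2)}$ oriented subgraphs. Hence $f_2(G)\ge\binom{N}{2}-o(N^2)$ and $e_2(G)\approx N^2$, violating (b).

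So at the step you flagged, the issue is not how double pairs are weighted. The issue is that a container must be able to admit both single arcs on a pair \emph{without} admitting the 2-cycle, and no digraph can express that. The paper's proof tries to handle this by decoding containers into state profiles, where $\Sigma_e=\{\emptyset,F,B\}$ is possible. Such pairs are in fact forced on many pairs: when $H$ has a 2-cycle, every hyperedge contains a $D$-element, so the $F$- and $B$-elements alone span no hyperedge. But the paper then asserts $\prod_e|\Sigma_e|\le 2^{e_2(\mathcal{P}_{C^*})}$, which fails exactly for such pairs, since $3>2^1$. A correct count must give these pairs weight $\log_2 3$, and that is not $e_a$ of any single digraph. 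So neither your route nor the paper's establishes the stated formula. For oriented $H$ (no 2-cycles), your argument is precisely the KOTZ counting argument and is valid.
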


\section{Notation and Preliminaries}\label{sec2}

For a digraph $G = (V, E)$, we distinguish between single arcs and 2-cycles. Let $f_2(G)$ denote the number of 2-cycles (pairs of vertices between which both directed arcs are present) and $f_1(G)$ denote the number of single arcs (pairs between which exactly one directed arc is present). The generalized edge count of $G$ with respect to a parameter $a \ge 1$ is defined as:
\[
e_a(G) = a \cdot f_2(G) + f_1(G).
\]
Taking $a=2$ corresponds exactly to counting the total number of directed edges in the digraph. 

The extremal number $\operatorname{ex}_a(N, H)$ is defined as the maximum value of $e_a(G)$ among all $H$-free digraphs $G$ on $N$ vertices:
\[
\operatorname{ex}_a(N, H) = \max \{ e_a(G) : G \text{ is } H\text{-free}, |V(G)| = N \}.
\]

Our proof relies on the standard uniform hypergraph container theorem established by Saxton and Thomason \cite{SaxtonThomason} and independently by Balogh, Morris, and Samotij \cite{BaloghMorrisSamotij}. To state their result, we introduce the necessary notation for hypergraph co-degrees. For an $r$-uniform hypergraph $\mathcal{H}$ on $n$ vertices, and a subset $\sigma \subseteq V(\mathcal{H})$, let $d(\sigma)$ denote the number of edges containing $\sigma$. For $1 \le j \le r$, let $\Delta_j(\mathcal{H})$ be the maximum degree of a subset of size $j$. The co-degree function is defined as:
\[
\delta(\mathcal{H}, \tau) = 2^{\binom{r}{2}-1} \sum_{j=2}^r 2^{-(j-1)} \delta_j,
\]
where the parameters $\delta_j$ are given by the relation:
\[
\delta_j \tau^{j-1} n \Delta_1 = \sum_{v \in V(\mathcal{H})} \Delta_j(\{v\}),
\]
and $\Delta_j(\{v\})$ represents the maximum number of edges containing $\{v\} \cup \sigma'$ over all subsets $\sigma' \subseteq V(\mathcal{H}) \setminus \{v\} $ of size $j-1$.

\begin{theorem}[Saxton--Thomason \cite{SaxtonThomason}]\label{thm:ST}
Let $0 < \varepsilon < 1/2$ and let $\mathcal{H}$ be an $r$-uniform hypergraph on $n$ vertices. Let $\tau $ be a positive real number such that $\tau \le 1/(144r^2 r!)$. Suppose that the co-degree function satisfies:
\[
\delta(\mathcal{H}, \tau) \le \frac{\varepsilon}{12\tau}.
\]
Then there exists a collection $\mathcal{C}^*$ of subsets of $V(\mathcal{H})$ such that:
\begin{enumerate}
    \item Every independent set of $\mathcal{H}$ is contained in some $C^* \in \mathcal{C}^*$.
    \item For every $C^* \in \mathcal{C}^*$, the number of induced edges satisfies $e(\mathcal{H}[C^*]) \le \varepsilon e(\mathcal{H})$.
    \item The size of the collection satisfies $\log |\mathcal{C}^*| \le c(r) n \tau \log(1/\tau)$, where $c(r)$ is a constant depending only on $r$.
\end{enumerate}
\end{theorem}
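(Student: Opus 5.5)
The plan is to reproduce the Saxton--Thomason argument in two layers. The first is a one-step \emph{container lemma} driven by a deterministic algorithm that extracts a small \emph{fingerprint} from each independent set. The second is an iteration of this lemma until the induced edge count drops below $\varepsilon e(\mathcal{H})$. Throughout I would measure sets by the degree measure $\mu(S)=\frac{1}{n\Delta_1}\sum_{v\in S}d(v)$ (or the average-degree normalisation $\frac{1}{nd}\sum_{v\in S}d(v)$), rather than by cardinality. This matters because $\delta_j$ is defined relative to $n\Delta_1$, and with this measure the hypothesis $\delta(\mathcal{H},\tau)\le \varepsilon/(12\tau)$ transfers unchanged to every sub-hypergraph $\mathcal{H}[C]$ if we keep the normalisation of the original $\mathcal{H}$.

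\textbf{Step 1 (one-step lemma).} For an independent set $I$, run the following algorithm. Fix a canonical order of $V(\mathcal{H})$. Maintain multi-hypergraphs $\mathcal{P}_r=\mathcal{H},\mathcal{P}_{r-1},\dots,\mathcal{P}_1$, where $\mathcal{P}_{s}$ consists of $s$-sets obtained from edges of $\mathcal{P}_{s+1}$ by deleting a vertex already placed in the fingerprint $T$. In each round, take the vertex $v$ of maximum current degree in $\mathcal{P}_s$ among the undecided vertices. If $v\in I$, put $v$ into $T$ and add the link sets $e\setminus\{v\}$ to $\mathcal{P}_{s-1}$. In all cases, declare $v$ decided. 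To prevent co-degree blow-up, cap the multiplicities: an $(s-1)$-set is added to $\mathcal{P}_{s-1}$ only if no $j$-subset of it already has degree exceeding the threshold $\tau^{r-j}\Delta_1$ (suitably scaled) in $\mathcal{P}_{s-1}$.

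Two features make this work. First, the algorithm's decisions depend only on $T$ and not on the rest of $I$, so $C=C(T)$ is a function of $T$. Second, $C$ can be taken to be $T$ together with all undecided vertices whose degrees in each $\mathcal{P}_s$ are below the thresholds. Because $I$ is independent, no edge of $\mathcal{P}_1$ lies inside $I$, so $I\subseteq C$. A vertex is added to $T$ only while it has high degree, so each such step consumes a $\tau$-fraction of the degree mass. This gives $|T|\le c(r)\tau n$.

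\textbf{Step 2 (the key estimate).} We need a dichotomy: either $\mu(C)\le 1-c'(r)$, or $e(\mathcal{H}[C])\le \varepsilon e(\mathcal{H})$. This is where the hypothesis $\delta(\mathcal{H},\tau)\le \varepsilon/(12\tau)$ and the bound $\tau\le 1/(144r^2r!)$ enter, and I expect it to be the main obstacle. The argument is a double count of the edges of $\mathcal{H}$ that survive into $C$. Each such edge either contributes degree to some $\mathcal{P}_s$ that was blocked by the multiplicity cap, or it is counted in $\mathcal{H}[C]$.

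The blocked mass is bounded by $\sum_j 2^{-(j-1)}\sum_v\Delta_j(\{v\})/(\tau^{j-1}n\Delta_1)$, that is, by a constant times $\delta(\mathcal{H},\tau)$. The factor $2^{\binom r2-1}$ absorbs the combinatorial losses across the $r$ levels. Hence the blocked mass is at most $\varepsilon/12$ of the total. If in addition $\mu(C)$ is close to $1$, the low-degree conditions defining $C$ force $e(\mathcal{H}[C])$ to be at most $\varepsilon e(\mathcal{H})$. I would first attempt this with the cleanest choice of thresholds, and then tune the constants $12$ and $144$ to close the inequalities.

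\textbf{Step 3 (iteration and counting).} Apply Step 1 to $\mathcal{H}[C]$, retaining the original normalisation, and repeat. Each round either terminates with $e(\mathcal{H}[C])\le\varepsilon e(\mathcal{H})$, or multiplies $\mu(C)$ by at most $1-c'$. Measure below $\varepsilon$ already forces few edges, so there are at most $k=O_r(\log(1/\varepsilon))$ rounds. The final container is determined by the tuple of fingerprints $(T_1,\dots,T_k)$, each of size at most $c\tau n$. Therefore
\[
|\mathcal{C}^*|\le\Bigl(\sum_{t\le c\tau n}\binom{n}{t}\Bigr)^{k},
\]
and so $\log|\mathcal{C}^*|\le c(r)\,n\tau\log(1/\tau)$. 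The $\varepsilon$-dependence is absorbed because $\tau$ is small in terms of $\varepsilon$ via the hypothesis. Properties 1 and 2 of Theorem~\ref{thm:ST} follow from $I\subseteq C$ at every round and from the termination condition.
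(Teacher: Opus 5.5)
The paper does not prove this statement; it is quoted from Saxton--Thomason. Your outline (a fingerprint algorithm with degree-capped link multigraphs $\mathcal{P}_s$, then iteration using the measure of the original hypergraph) has the shape of their argument.

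The step that fails is the conclusion of your Step~2, ``hence the blocked mass is at most $\varepsilon/12$ of the total''. Your own estimate bounds the blocked fraction by $\sum_{j}2^{-(j-1)}\delta_j=2^{1-\binom{r}{2}}\delta(\mathcal{H},\tau)$. The hypothesis you are given is $\delta(\mathcal{H},\tau)\le\varepsilon/(12\tau)$, and that allows a blocked fraction of order $\varepsilon/\tau$. This exceeds $1$ as soon as $\tau$ is small compared with $\varepsilon$, so it controls nothing. It also gets weaker as $\tau\to 0$, which is exactly the regime where conclusion 3 is strong.

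You have silently replaced the hypothesis by $\delta(\mathcal{H},\tau)=O(\varepsilon)$. That is what Saxton and Thomason actually assume: $\delta(\mathcal{H},\tau)\le\varepsilon/(12r!)$, with the $\delta_j$ normalised by the average degree $d$ rather than by $\Delta_1$. Two further steps would also need repair:
\begin{itemize}
\item With $\mu$ normalised by $n\Delta_1$ you have $\mu(V(\mathcal{H}))=d/\Delta_1$. So ``$\mu(C)\le 1-c'$'' can be vacuous, and ``$\mu(C)<\varepsilon$'' only gives $e(\mathcal{H}[C])<\varepsilon(\Delta_1/d)\,e(\mathcal{H})$.
\item Your Step~3 runs $O(\log(1/\varepsilon))$ rounds, so it yields $\log|\mathcal{C}^*|\le c(r)\log(1/\varepsilon)\,n\tau\log(1/\tau)$, as in Saxton--Thomason. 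Nothing in your argument justifies absorbing the factor $\log(1/\varepsilon)$.
\end{itemize}

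No argument can close this gap, because the statement as transcribed is false. Take $r=2$, fix $k\ge 6/\varepsilon$, and let $\mathcal{H}$ be the disjoint union of $q=n/(k+1)$ copies of $K_{k+1}$ (with $k+1\mid n$).
\begin{itemize}
\item The hypothesis holds for every $\tau$: we have $\Delta_1=k$ and $\Delta_2(\{v\})=1$ for every $v$, so $\delta(\mathcal{H},\tau)=\tfrac12\delta_2=\tfrac{1}{2\tau k}\le\tfrac{\varepsilon}{12\tau}$ for \emph{every} $\tau>0$.
\item There are $(k+2)^q$ independent sets.
\item A set $C$ with $e(\mathcal{H}[C])\le\varepsilon e(\mathcal{H})$ contains at most $\varepsilon q$ whole cliques. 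Hence it contains at most $(k+2)^{\varepsilon q}(k+1)^{(1-\varepsilon)q}$ independent sets.
\item Therefore any family with properties 1 and 2 has $\log|\mathcal{C}^*|\ge(1-\varepsilon)q\log\frac{k+2}{k+1}=\Omega_\varepsilon(n)$.
\item Property 3 with $\tau=1/n$ promises only $O(\log n)$, a contradiction.
\end{itemize}

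What your strategy can establish is the genuine Saxton--Thomason theorem. That is also all the paper's application needs. The computation in Lemma~\ref{lem:degree} actually gives $\delta_j=O(\gamma^{-(j-1)})$, so choosing $\gamma$ large in terms of $\varepsilon$ makes the co-degree function genuinely at most $\varepsilon/(12r!)$. That avoids leaning on the factor $1/\tau$, as the proof of Theorem~\ref{thm:main} currently does.
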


\section{The Pair-State Encoding and the Auxiliary Hypergraph}

To apply Theorem~\ref{thm:ST} without triggering the 2-cycle division-by-zero bottleneck, we construct an auxiliary hypergraph $\mathcal{H}$ over a state-encoded ground set. Let $X = \binom{[N]}{2}$ be the set of all $\binom{N}{2}$ unordered vertex pairs from the vertex set $[N]$. We define our ground set $U$ as the Cartesian product of $X$ and a set of three directional states $\Sigma = \{F, B, D\}$, representing Forward, Backward, and Double, respectively:
\[
U = X \times \{F, B, D\}.
\]
Note that the number of vertices in our auxiliary hypergraph is $n = |U| = 3\binom{N}{2} = \Theta(N^2)$.

\subsection{Encoding Digraphs into the Ground Set}
For any digraph $G$ on $[N]$, we define its encoded representation $S_G \subseteq U$ via a downward-closure property. For each unordered pair $e = \{u, v\}$ with $u < v$, we populate $S_G$ according to the following rules:
\begin{itemize}
    \item If $G$ contains only the single arc $u \to v$, we add the element $(e, F)$ to $S_G$.
    \item If $G$ contains only the single arc $v \to u$, we add the element $(e, B)$ to $S_G$.
    \item If $G$ contains a 2-cycle on $e$ (meaning both $u \to v$ and $v \to u$ are present), we add \textbf{all three elements} $(e, D)$, $(e, F)$, and $(e, B)$ to $S_G$.
\end{itemize}
This downward-closure property ensures that a 2-cycle implicitly satisfies any subgraph requirement for a single directed edge in either direction, which is structurally essential for the hypergraph encoding to reflect subgraph containment correctly.

\subsection{Defining the Hyperedges}
We define an $r$-uniform hypergraph $\mathcal{H}$ with vertex set $V(\mathcal{H}) = U$, where the uniformity is set to $r = p(H)$ (the number of underlying pairs with edges in the forbidden digraph $H$). Let $h = v(H)$. A labeled copy of $H$ on $[N]$ is defined by an injection $\phi: V(H) \to [N]$ that preserves adjacency. For each such injection $\phi$, we construct a hyperedge $E_\phi \subset U$ of size $r$ by evaluating each of the $r$ interacting vertex pairs in $H$. For each pair $\{x, y\} \subset V(H)$ that contains at least one edge in $H$, let us assume without loss of generality that $\phi(x) < \phi(y)$ and denote $e = \{\phi(x), \phi(y)\}$. We add exactly one element to $E_\phi$ based on the structure of $H$ on $\{x,y\}$:
\begin{itemize}
    \item If $H$ has a 2-cycle on $\{x,y\}$, we add $(e, D)$ to $E_\phi$.
    \item If $H$ has only the single arc $x \to y$, we add $(e, F)$ to $E_\phi$.
    \item If $H$ has only the single arc $y \to x$, we add $(e, B)$ to $E_\phi$.
\end{itemize}
Since $E_\phi$ contains exactly one element for each of the $r$ pairs, it is an $r$-uniform hypergraph. The downward-closure definition of $S_G$ ensures that if $G$ contains the copy of $H$ given by $\phi$, then all elements of $E_\phi$ are present in $S_G$. Conversely, if $E_\phi \subseteq S_G$, the definition of $S_G$ implies that $G$ possesses the required arcs. This yields the following fundamental equivalence:

\begin{lemma}\label{lem:independence}
A digraph $G$ contains a copy of $H$ if and only if its encoded representation $S_G$ contains a hyperedge $E_\phi \in \mathcal{H}$. Consequently, $G$ is $H$-free if and only if $S_G$ is an independent set in $\mathcal{H}$.
\end{lemma}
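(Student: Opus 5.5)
The plan is to check the two implications pair by pair, using only the definitions of $S_G$ and $E_\phi$. The second sentence then follows at once, since $S_G$ is independent exactly when it contains no hyperedge. I read ``copy of $H$ in $G$'' as a not-necessarily-induced subdigraph, that is, an injection $\phi\colon V(H)\to[N]$ with $\phi(x)\to\phi(y)$ an arc of $G$ whenever $x\to y$ is an arc of $H$. I read the hyperedges of $\mathcal{H}$ as the sets $E_\phi$ over all injections $\phi$. The set $E_\phi$ depends only on $\phi$ and $H$, not on $G$, so ``preserves adjacency'' just means that arcs of $H$ are transported along $\phi$. Pairs of $V(H)$ carrying no arc contribute nothing to $E_\phi$ and impose no condition on $G$, which matches non-induced containment.

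First I fix the orientation convention. For a pair $\{x,y\}$ of $H$ carrying at least one arc, I rename the two vertices so that $\phi(x)<\phi(y)$, and set $u=\phi(x)$, $v=\phi(y)$, $e=\{u,v\}$. Under this renaming the three cases in the definition of $E_\phi$ read as follows. If $H$ has a 2-cycle on $\{x,y\}$, the required arcs are $u\to v$ and $v\to u$, and the element is $(e,D)$. If $H$ has only $x\to y$, the required arc is $u\to v$ and the element is $(e,F)$. If $H$ has only $y\to x$, the required arc is $v\to u$ and the element is $(e,B)$. The key observation is a per-pair equivalence read off from the three encoding rules: $(e,D)\in S_G$ iff $G$ has both arcs on $e$; $(e,F)\in S_G$ iff $u\to v\in E(G)$; and $(e,B)\in S_G$ iff $v\to u\in E(G)$. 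For instance, $(e,F)$ is added to $S_G$ exactly when $G$ has the single arc $u\to v$ or a 2-cycle on $e$, which is exactly when $u\to v$ is present. This is precisely where the downward closure is used.

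With this equivalence, the element of $E_\phi$ attached to $\{x,y\}$ lies in $S_G$ iff $G$ contains all arcs that $H$ places on $\{x,y\}$, transported by $\phi$. Taking the conjunction over all $r=p(H)$ arc-carrying pairs gives $E_\phi\subseteq S_G$ iff $\phi$ is an embedding of $H$ into $G$. Hence $G$ contains a copy of $H$ iff some $E_\phi\subseteq S_G$. It does not matter that different injections $\phi$ may produce the same set $E_\phi$. The only step needing care is the orientation bookkeeping, namely swapping $x$ and $y$ so that $F$ and $B$ are interpreted relative to the order on $[N]$ rather than on $V(H)$. I expect no genuine obstacle beyond stating that convention explicitly.
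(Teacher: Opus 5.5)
Your proposal is correct and follows the same route as the paper. The paper justifies the lemma in the paragraph just before it: downward closure puts every element of $E_\phi$ into $S_G$ when $\phi$ embeds $H$, and conversely $E_\phi\subseteq S_G$ forces the required arcs. Your per-pair biconditionals ($(e,F)\in S_G$ iff $u\to v\in E(G)$, and so on), the explicit orientation convention, and your reading of the hyperedges as $E_\phi$ over all injections make that sketch fully precise.
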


\section{Degree Estimates without Division by Zero}

In this section, we establish the bounded co-degree property for the auxiliary hypergraph $\mathcal{H}$, proving that the pair-state encoding successfully bypasses the division-by-zero obstacle.

\begin{lemma}\label{lem:degree}
For any $\gamma > 0$ and sufficiently large $N$, the co-degree function satisfies $\delta(\mathcal{H}, \tau) \le O(1)$ with the choice of container parameter $\tau = \gamma N^{-1/m_p(H)}$.
\end{lemma}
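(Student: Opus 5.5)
We estimate the quantities entering the co-degree function directly: first the average degree $\Delta_1$, then the maximum $j$-degrees $\Delta_j(\{v\})$, and then substitute into the defining relation for $\delta_j$. Throughout, $r=p(H)$ and $h=v(H)$ are fixed, and all implied constants depend only on $H$. Every hyperedge $E_\phi$ comes from an injection $\phi:V(H)\to[N]$, so $e(\mathcal{H})=\Theta(N^{h})$ (distinct $\phi$ may give the same set, but only $O(1)$ of them can). Since $n=|U|=\Theta(N^2)$ and each hyperedge has exactly $r$ elements, the average degree is $\Theta(N^{h-2})$. For the normalisation we only need that $n\Delta_1\ge r\,e(\mathcal{H})=\Omega(N^{h})$, and this holds whether $\Delta_1$ denotes the average or the maximum degree.

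\textbf{Counting the $j$-degrees.} Fix $v\in U$ and a set $\sigma=\{v\}\cup\sigma'$ with $|\sigma|=j\ge 2$. Each element of $\sigma$ lies over an unordered pair of $[N]$, and pairs in a common hyperedge are distinct, so we may assume $\sigma$ uses $j$ distinct pairs. Otherwise the degree of $\sigma$ is zero. A hyperedge $E_\phi\supseteq\sigma$ assigns these $j$ pairs bijectively to $j$ edge-pairs of $H$. Let $H'\subseteq H$ be the subgraph spanned by those $j$ pairs. It has $p(H')=j\ge 2$, and $v(H')$ vertices.

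The images of $V(H')$ under $\phi$ are then fixed up to $O(1)$ choices, because they are the vertices covered by the pairs in $\sigma$. The remaining $h-v(H')$ vertices of $H$ can be placed in at most $N^{h-v(H')}$ ways. Summing over the $O(1)$ choices of $H'$ and of the bijection gives
\[
\Delta_j(\{v\}) \le O(1)\cdot \max_{H'\subseteq H,\ p(H')=j} N^{h-v(H')}.
\]
By the definition of $m_p(H)$ (Definition~\ref{def:mp}), every such $H'$ satisfies $v(H')-2\ge (p(H')-1)/m_p(H)=(j-1)/m_p(H)$. The Remark guarantees this denominator is positive, which is exactly where the $2$-cycle problem disappears. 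Hence
\[
\Delta_j(\{v\})=O\big(N^{h-2-(j-1)/m_p(H)}\big).
\]

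\textbf{Plugging in.} Summing over the $n=\Theta(N^2)$ choices of $v$ gives $\sum_v\Delta_j(\{v\})=O(N^{h-(j-1)/m_p(H)})$. Therefore
\[
\delta_j=\frac{\sum_v\Delta_j(\{v\})}{\tau^{j-1}n\Delta_1}=O\!\left(\frac{N^{h-(j-1)/m_p(H)}}{\gamma^{j-1}N^{-(j-1)/m_p(H)}N^{h}}\right)=O(\gamma^{-(j-1)}).
\]
Since $2\le j\le r$ ranges over finitely many values, $\delta(\mathcal{H},\tau)=2^{\binom r2-1}\sum_j 2^{-(j-1)}\delta_j=O_\gamma(1)$, which is the claim.

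\textbf{Main obstacle.} The only delicate step is the counting bound on $\Delta_j(\{v\})$. We must check that a set $\sigma$ of $j$ elements can only be extended to hyperedges in a way controlled by an honest subgraph $H'$ with $p(H')=j$. This requires handling three points: elements of $\sigma$ lying over the same pair, which contribute degree zero; the state labels $F,B,D$, which only further restrict the count; and the ordering convention $\phi(x)<\phi(y)$, which fixes the orientation but not the vertex count. Once these are handled, the exponent bound follows immediately from the definition of $m_p(H)$.

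Note that the $O(1)$ bound depends on $\gamma$ as $\gamma^{-(r-1)}$ in the worst case. This should be tracked, since Theorem~\ref{thm:ST} later requires $\delta\le\varepsilon/(12\tau)$, and $\tau\to0$ makes this easy to satisfy.
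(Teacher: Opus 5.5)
Your proposal is correct and follows the paper's proof essentially step for step: normalise by $n\Delta_1=\Theta(N^{h})$, discard sets $\sigma$ carrying two states over one pair, bound $d(\sigma)=O(N^{h-v(H')})$ using $v(H')-2\ge (j-1)/m_p(H)$ from Definition~\ref{def:mp}, and sum over $U$ to get $\delta_j=O(1)$. Your extra bookkeeping refines the same argument rather than changing it, and the explicit bound $\delta_j=O(\gamma^{-(j-1)})$ is the form actually needed: to my knowledge, Saxton and Thomason's original result assumes $\delta(\mathcal{H},\tau)\le\varepsilon/(12\,r!)$, which is met by taking $\gamma$ large, not the $\varepsilon/(12\tau)$ quoted in the paper and in your closing remark.
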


\begin{proof}
The total number of hyperedges in $\mathcal{H}$ corresponds to the number of labeled embeddings of $H$ into $[N]$, which yields $e(\mathcal{H}) = \Theta(N^h)$. Since the number of vertices in $\mathcal{H}$ is $n = \Theta(N^2)$ and $\mathcal{H}$ is $r$-uniform, the average degree of a vertex in $U$ is given by $d = \frac{r e(\mathcal{H})}{n} = \Theta(N^{h-2})$. 

The maximum degree $\Delta_1$ is bounded by fixing one element in $U$, which fixes the coordinates of 2 vertices of $H$, and then choosing the remaining $h-2$ vertices from $[N]$. Thus, $\Delta_1 = \Theta(N^{h-2})$, which directly implies that $n \Delta_1 = \Theta(N^2) \cdot \Theta(N^{h-2}) = \Theta(N^h)$.

Now, let us consider a subset $\sigma \subset U$ of size $j \ge 2$. If $\sigma$ contains conflicting states for the same underlying unordered pair (for example, both $(e, F)$ and $(e, B)$, or $(e, F)$ and $(e, D)$), then by construction, no hyperedge $E_\phi$ can contain $\sigma$, because each hyperedge contains exactly one state per pair. In this case, the co-degree satisfies $d(\sigma) = 0$.

Therefore, we only need to consider subsets $\sigma$ that do not contain conflicting states. Such a subset $\sigma$ consists of $j$ distinct unordered pairs in $[N]$. Let $s$ denote the number of vertices in $[N]$ spanned by these $j$ pairs. Since the maximum number of pairs that can be formed on $s$ vertices is $\binom{s}{2}$, we must have the structural inequality:
\[
j \le \binom{s}{2}.
\]
For $j \ge 2$, this inequality strictly implies that $s \ge 3$. Consequently, we have $s - 2 \ge 1 > 0$, which structurally eliminates the division-by-zero bottleneck. 

By Definition~\ref{def:mp}, the sub-configuration $H'$ of $H$ induced by these $j$ pairs satisfies the pair-density bound $\frac{p(H') - 1}{v(H') - 2} \le m_p(H)$. Since $p(H') = j$ and $v(H') = s$, this can be rewritten as $\frac{j - 1}{s - 2} \le m_p(H)$, which rearranges to:
\[
s - 2 \ge \frac{j-1}{m_p(H)}.
\]
The number of ways to extend this sub-configuration $\sigma$ to a full labeled copy of $H$ is bounded by the number of choices for the remaining $h - s$ vertices from $[N]$. Therefore, the co-degree of $\sigma$ is bounded by:
\[
d(\sigma) \le O(N^{h-s}) \le O\left(N^{h - 2 - (j-1)/m_p(H)}\right).
\]
Crucially, because the inequality $s - 2 \ge \frac{j-1}{m_p(H)}$ holds uniformly for \emph{every} valid sub-configuration $\sigma$ satisfying $d(\sigma) > 0$, the maximum co-degree $\Delta_j(\{v\})$ for any single vertex $v \in U$ is bounded uniformly by the same upper bound:
\[
\Delta_j(\{v\}) \le \max_{\sigma: v \in \sigma, |\sigma|=j} d(\sigma) \le O\left(N^{h - 2 - (j-1)/m_p(H)}\right).
\]
We can now evaluate the parameters $\delta_j$. By definition, we have:
\[
\delta_j \tau^{j-1} n \Delta_1 = \sum_{v \in U} \Delta_j(\{v\}).
\]
Substituting $n \Delta_1 = \Theta(N^h)$ and summing the uniform upper bound over the $|U| = O(N^2)$ vertices, we obtain:
\[
\delta_j \tau^{j-1} \Theta(N^h) \le O(N^2) \cdot O\left(N^{h - 2 - (j-1)/m_p(H)}\right) = O\left(N^{h - (j-1)/m_p(H)}\right).
\]
Solving for $\delta_j$ yields:
\[
\delta_j \le O\left(\frac{N^{-(j-1)/m_p(H)}}{\tau^{j-1}}\right).
\]
Substituting the chosen parameter $\tau = \gamma N^{-1/m_p(H)}$, the terms cancel precisely, leaving $\delta_j \le O(1)$ for all $2 \le j \le r$. Since the co-degree function $\delta(\mathcal{H}, \tau)$ is a linear combination of a finite number of $\delta_j$ terms, it follows that $\delta(\mathcal{H}, \tau) \le O(1)$, completing the proof.
\end{proof}

\section{Supersaturation Lemma for Generalized Edge Count}

Before establishing the supersaturation property for the generalized edge count, we recall the foundational counting lemma in the theory of dense graph limits extended to multi-type or multi-colored structures. The multi-type graphon space provides a continuous counterpart to sequences of graphs with categorical edge states. Central to this continuous machinery is the \emph{Multi-type Graphon Counting Lemma} (see, e.g., Borgs et al. \cite{BCLSV} and Lov{\'a}sz \cite{LovaszBook}), which asserts that the subgraph homomorphism density function is a continuous linear functional with respect to the cut distance metric. 

For a multi-type graphon $W : [0,1]^2 \to \Delta(\Sigma_0)$, where $\Sigma_0$ is the finite state space, the \emph{cut norm} is defined as
\[
\|W\|_{\square} = \max_{c \in \Sigma_0} \sup_{S, T \subseteq [0,1]} \left| \int_S \int_T W_c(x,y) dx dy \right|,
\]
where $W_c(x,y)$ is the density of state $c$ between $x$ and $y$. The corresponding \emph{cut metric} $\delta_{\square}$ is the infimum of the cut norm over all measure-preserving bijections $\phi$ of $[0,1]$:
\[
\delta_{\square}(U, W) = \inf_{\phi} \| U - W^\phi \|_{\square}.
\]

\begin{lemma}[Multi-type Graphon Counting Lemma, \cite{BCLSV, LovaszBook}]\label{lem:counting_lemma}
Let $H$ be a fixed finite multi-type graph (or a digraph represented via a multi-colored state space $\Sigma_0$). If a sequence of multi-type graphons $(W_N)_{N \in \mathbb{N}}$ converges to a limiting multi-type graphon $W$ in the cut metric $\delta_{\square}$ as $N \to \infty$, then the homomorphism density satisfies
\[
\lim_{N \to \infty} t(H, W_N) = t(H, W).
\]
\end{lemma}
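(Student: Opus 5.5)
The plan is to deduce Lemma~\ref{lem:counting_lemma} from the standard counting lemma for ordinary graphons, applied colour by colour. A multi-type graphon is simply a tuple of $[0,1]$-valued symmetric kernels $(W_c)_{c\in\Sigma_0}$ with $\sum_c W_c\equiv 1$. A multi-type graph $H$ assigns a state $c(e)\in\Sigma_0$ to each edge $e$, and its homomorphism density is
\[
t(H,W)=\int_{[0,1]^{V(H)}}\prod_{\{i,j\}\in E(H)}W_{c(ij)}(x_i,x_j)\,dx.
\]
For a digraph we take $\Sigma_0=\{\emptyset,F,B,D\}$, with an ordering convention on the coordinates so that $F$ and $B$ are exchanged under transposition. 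The argument then has two parts. First we prove a quantitative telescoping estimate for fixed measure-preserving alignments. Then we pass to the cut metric by optimizing over the alignment.

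\textbf{Step 1 (telescoping estimate).} Fix two multi-type graphons $U,W$ and enumerate the edges $e_1,\dots,e_k$ of $H$. Write $t(H,U)-t(H,W)$ as a telescoping sum of $k$ terms. In the $\ell$-th term, the edges $e_1,\dots,e_{\ell-1}$ carry $W$, the edges $e_{\ell+1},\dots,e_k$ carry $U$, and the edge $e_\ell=\{i,j\}$ carries the difference $U_{c}-W_{c}$ with $c=c(e_\ell)$.

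Fix all variables other than $x_i,x_j$. Every other factor is $[0,1]$-valued and depends on at most one of $x_i,x_j$. So the $\ell$-th term has the form
\[
\int f(x_i)\,g(x_j)\,(U_c-W_c)(x_i,x_j)\,dx_i\,dx_j
\]
with $f,g$ taking values in $[0,1]$. The usual bilinear extension of the cut norm bounds this by $4\|U_c-W_c\|_\square$; a refinement gives $\|U_c-W_c\|_\square$, but the constant is irrelevant. By the definition of the multi-type cut norm this is at most $4\|U-W\|_\square$. Integrating over the remaining variables gives
\[
|t(H,U)-t(H,W)|\le 4\,e(H)\,\|U-W\|_\square .
\]

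\textbf{Step 2 (invariance and cut metric).} The density is invariant under relabelling: $t(H,W^\phi)=t(H,W)$ for every measure-preserving bijection $\phi$. This follows by the change of variables $x\mapsto\phi(x)$ coordinatewise. Hence Step 1 gives
\[
|t(H,W_N)-t(H,W)|\le 4e(H)\,\|W_N-W^\phi\|_\square
\]
for every $\phi$. Taking the infimum over $\phi$ yields
\[
|t(H,W_N)-t(H,W)|\le 4e(H)\,\delta_\square(W_N,W)\to 0 .
\]

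The only delicate point is the mixed-direction issue for digraph states. The kernels $W_F$ and $W_B$ are not symmetric; rather, $W_F(x,y)=W_B(y,x)$. Consequently the bilinear cut-norm bound must be applied to possibly non-symmetric kernels. This is harmless, because the supremum over $S,T$ in the definition of $\|\cdot\|_\square$ already ranges over ordered pairs of sets. Once this is checked, no further obstacle remains.
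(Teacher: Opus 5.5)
Your proof is correct, and it is the standard proof behind the references the paper cites. The paper does not prove this lemma itself, only cites it: you telescope over the edges of $H$, use the fact that the remaining factors split into $[0,1]$-valued functions of $x_i$ and of $x_j$ to bound each term by the cut norm of one state kernel, and then use invariance of $t(H,\cdot)$ under measure-preserving relabelling to pass from $\|W_N-W^\phi\|_\square$ to $\delta_\square(W_N,W)$. Because those functions are $[0,1]$-valued the constant is actually $1$, so $|t(H,W_N)-t(H,W)|\le e(H)\,\delta_\square(W_N,W)$. Your check that the non-symmetric kernels $W_F,W_B$ are harmless, since the supremum runs over ordered pairs $(S,T)$, is also right.
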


This lemma guarantees that global substructure densities are continuous under metric convergence, which serves as the mathematical bedrock for proving discrete supersaturation phenomena via topological compactness arguments. We now utilize this continuous framework to establish the following threshold behavior for the generalized edge count.

\begin{lemma}[Supersaturation for Generalized Edge Count]\label{lem:supersat}
Let $H$ be a digraph and let $a \ge 1$. For every $\varepsilon > 0$, there exists $\delta > 0$ such that for all sufficiently large $N$, if a digraph $G$ on $N$ vertices contains at most $\delta N^{v(H)}$ copies of $H$, then its generalized edge count satisfies $e_a(G) \le \operatorname{ex}_a(N, H) + \varepsilon N^2$.
\end{lemma}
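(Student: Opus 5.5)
We argue by contradiction using the compactness of the multi-type graphon space together with Lemma~\ref{lem:counting_lemma}. We represent a digraph $G$ on $[N]$ as a multi-type graph with state space $\Sigma_0=\{0,F,B,D\}$ on unordered pairs: no arc, forward arc, backward arc, or 2-cycle, with $u<v$ fixing the orientation convention. Passing to the step graphon $W_G$, the generalized edge count becomes a linear functional,
\[
\frac{2e_a(G)}{N^2} = \int\!\!\int \bigl(W_{G,F}+W_{G,B}+a\,W_{G,D}\bigr)\,dx\,dy ,
\]
and this functional is continuous in $\delta_\square$, since the cut norm controls integrals over $[0,1]^2$ of each state density. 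The homomorphism density of $H$ (counted as labelled copies, where a 2-cycle in $G$ may host a single arc of $H$) is a finite sum of multi-type homomorphism densities. By Lemma~\ref{lem:counting_lemma}, $t(H,\cdot)$ is therefore also continuous. Moreover, copies of $H$ and homomorphic images differ by $O(N^{v(H)-1})$, so ``at most $\delta N^{v(H)}$ copies'' gives $t(H,W_G)\le v(H)!\,\delta+o(1)$.

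Suppose the lemma fails for some $\varepsilon>0$. Then there are $N_k\to\infty$ and digraphs $G_k$ on $N_k$ vertices with at most $N_k^{v(H)}/k$ copies of $H$ but
\[
e_a(G_k)>\operatorname{ex}_a(N_k,H)+\varepsilon N_k^2 .
\]
By compactness of the multi-type graphon space under $\delta_\square$, a subsequence converges to some $W$. Continuity then gives $t(H,W)=0$ and
\[
\int\!\!\int (W_F+W_B+aW_D)\ \ge\ \limsup_k \frac{2\operatorname{ex}_a(N_k,H)}{N_k^2}+2\varepsilon .
\]

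The remaining step, which is the main obstacle, is to show that an $H$-free limit cannot beat the discrete extremal density. That is, if $t(H,W)=0$ then $\int\!\!\int(W_F+W_B+aW_D)\le \pi_a(H)$, where
\[
\pi_a(H)=\lim_N \frac{2\operatorname{ex}_a(N,H)}{N^2} .
\]
This limit exists by the standard averaging argument: $\operatorname{ex}_a(N,H)/\binom N2$ is non-increasing. For the bound, I would sample a $W$-random digraph $G(n,W)$, assigning each pair a state independently according to $W(x_i,x_j)$. Its expected number of copies of $H$ is $t(H,W)\,n^{v(H)}=0$, so it is $H$-free almost surely. Its expected $e_a$ equals $\binom n2$ times the integral, so some $H$-free sample has
\[
e_a\ \ge\ \binom n2\int\!\!\int(W_F+W_B+aW_D),
\]
which contradicts the definition of $\operatorname{ex}_a(n,H)$ for large $n$ by the $2\varepsilon$ margin.

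The delicate points are the following.
\begin{itemize}
\item Copies of $H$ in $G(n,W)$ must be measured through exactly the downward-closed state densities that the counting lemma handles, with $D$ counted as supporting $F$ and $B$.
\item $\operatorname{ex}_a(N_k,H)/N_k^2$ converges, so that the limsup above equals $\pi_a(H)/2$.
\end{itemize}
If the sampling step proved awkward, I would instead use a removal-lemma route. A multicolour regularity lemma followed by deletion of $o(N^2)$ pairs would make $G$ $H$-free. Deleting a pair lowers $e_a$ by at most $a$, so this loses only $o(N^2)$ in $e_a$ and gives $e_a(G)\le\operatorname{ex}_a(N,H)+\varepsilon N^2$ directly.
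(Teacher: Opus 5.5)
Your proposal is correct and follows the same route as the paper. Both argue by contradiction, using compactness of the multi-type graphon space and continuity of the $e_a$-functional and of $t(H,\cdot)$ via Lemma~\ref{lem:counting_lemma}; the paper merely runs the contradiction in the opposite order, inferring $t(H,W)>0$ from the density excess and then contradicting the few-copies hypothesis. Your $W$-random sampling argument supplies the step the paper only asserts, namely that a limit with $t(H,W)=0$ cannot exceed $\lim_N \operatorname{ex}_a(N,H)/(a\binom{N}{2})$ (the paper's unproved identification of this limit with $\Lambda_a^*(H)$), and your choice $\delta=1/k$ handles the quantifiers more cleanly than the paper does.
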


\begin{proof}
We establish this lemma rigorously by mapping the digraph problem to the framework of multi-type graph limits. Suppose for contradiction that for a fixed $\varepsilon > 0$ and for all $\delta > 0$, there exist arbitrarily large digraphs $G$ on $N$ vertices such that $e_a(G) \ge \operatorname{ex}_a(N, H) + \varepsilon N^2$, yet $G$ contains fewer than $\delta N^{v(H)}$ copies of $H$.

A digraph $G$ on $N$ vertices can be uniquely represented as a multi-colored graph on $[N]$ where each pair of vertices $\{u, v\}$ is assigned one of four states from the set $\Sigma_0 = \{\emptyset, F, B, D\}$, corresponding to no arc, a forward single arc, a backward single arc, or a 2-cycle, respectively. The normalized edge density parameter $\rho_a(G) = \frac{e_a(G)}{a\binom{N}{2}} = \frac{a \cdot f_2(G) + f_1(G)}{a\binom{N}{2}}$ takes values in the bounded interval $[0, 1]$. In the continuous limit, this corresponds to a continuous linear functional $\Lambda_a$ on the space of multi-type graphons $\mathcal{W}$, mapping to $[0, 1]$.

Let $W_N$ be the sequence of multi-type graphons associated with the sequence of counterexample digraphs $G$. By the compactness of the graphon space under the cut distance \cite{LovaszSzegedy}, there exists a subsequence of $W_N$ that converges to a limiting multi-type graphon $W$. Since the linear functional $\Lambda_a(\cdot)$ is continuous under this convergence, we have:
\[
\Lambda_a(W) = \lim_{N \to \infty} \rho_a(G) \ge \lim_{N \to \infty} \frac{\operatorname{ex}_a(N, H)}{a\binom{N}{2}} + \frac{2\varepsilon}{a} = \Lambda_a^*(H) + \frac{2\varepsilon}{a},
\]
where $\Lambda_a^*(H)$ is the maximum functional value achievable by any $H$-free multi-type graphon.

Since its evaluation strictly exceeds $\Lambda_a^*(H)$, the limiting graphon $W$ cannot be $H$-free. That is, its homomorphism density for $H$ is strictly positive: $t(H, W) > 0$.

By the Multi-type Graphon Counting Lemma (Lemma~\ref{lem:counting_lemma}), the homomorphism density converges continuously: $\lim_{N \to \infty} t(H, W_N) = t(H, W) = c_0 > 0$. Thus, for sufficiently large $N$, the number of copies of $H$ in $G$ is asymptotically $\frac{c_0}{|\text{Aut}(H)|} N^{v(H)}$. Choosing $\delta < \frac{c_0}{2 |\text{Aut}(H)|}$ yields an immediate contradiction to the assumption that $G$ has fewer than $\delta N^{v(H)}$ copies of $H$. This completes the proof.
\end{proof}

\section{Proof of the Main Theorem and Counting Application}

\begin{figure}[htbp]
    \centering
    \resizebox{\textwidth}{!}{%
    \begin{tikzpicture}[
        node distance=1.5cm and 1cm,
        box/.style={rectangle, draw, rounded corners, minimum width=4.5cm, minimum height=0.8cm, align=center, fill=blue!5},
        alertbox/.style={rectangle, draw=red, rounded corners, dashed, minimum width=4.5cm, minimum height=0.8cm, align=center, fill=red!5},
        solutionbox/.style={rectangle, draw=green!60!black, thick, rounded corners, minimum width=5cm, minimum height=1cm, align=center, fill=green!5},
        arrow/.style={->, >=stealth, thick}
    ]

    \node[box] (start) {Standard Hypergraph Model \\ on Directed Edges};
    
    \node[alertbox, below=of start] (bottleneck) {Division-by-Zero Bottleneck \\ $v(H') - 2 = 0$ for 2-cycles};
    
    \node[solutionbox, below=of bottleneck] (encoding) {\textbf{Pair-State Encoding} \\ Ground Set: $U = X \times \{F, B, D\}$};
    
    \node[box, below left=1.5cm and -1.5cm of encoding] (hypergraph) {Construct Aux. Hypergraph $\mathcal{H}$ \\ $I \subseteq G \iff S_I \text{ is independent}$};
    
    \node[box, below right=1.5cm and -1.5cm of encoding] (codegree) {Degree Estimates via $m_p(H)$ \\ Structurally ensures $s - 2 \ge 1$};
    
    \node[box, below=2.5cm of encoding] (saxton) {Saxton-Thomason Container Theorem \\ $\log |\mathcal{C}^*| \le c N^{2 - 1/m_p(H)} \log N$};
    
    \node[box, below=of saxton] (decoding) {Decode Containers to State Profiles \\ $\mathcal{P}_{C^*}$ via Combinatorial Filter};
    
    \node[box, fill=gray!10, thick, below=of decoding] (final) {\textbf{Main Theorem \& Counting Result} \\ $\log_2 f^*(N, H) = \operatorname{ex}_2(N, H) + o(N^2)$};

    \draw[arrow] (start) -- (bottleneck);
    \draw[arrow] (bottleneck) -- node[right, text=black, font=\footnotesize] {Resolved by} (encoding);
    
    \draw[arrow] (encoding.south) -- ++(0,-0.5) -| (hypergraph.north);
    \draw[arrow] (encoding.south) -- ++(0,-0.5) -| (codegree.north);
    
    \draw[arrow] (hypergraph.south) |- (saxton.west);
    \draw[arrow] (codegree.south) |- (saxton.east);
    
    \draw[arrow] (saxton) -- (decoding);
    \draw[arrow] (decoding) -- (final);

    \end{tikzpicture}%
    }
    \caption{The proof framework of the general digraph container theorem. The pair-state encoding maps the structural constraints into a well-behaved uniform hypergraph, bypassing the co-degree explosion.}
    \label{fig:framework}
\end{figure}

We are now ready to combine our degree estimates and the supersaturation property to complete the proof of the universal container theorem.

\begin{proof}[Proof of Theorem~\ref{thm:main}]
By Lemma~\ref{lem:degree}, we can set the container parameter to $\tau = N^{-1/m_p(H)}$. Since $\tau \to 0$ as $N \to \infty$, the technical requirement $\tau \le 1/(144r^2 r!)$ from Theorem~\ref{thm:ST} is fully satisfied for all sufficiently large $N$. Moreover, the lemma guarantees that $\delta(\mathcal{H}, \tau) \le O(1)$, meaning that for any $\varepsilon' > 0$, the condition $\delta(\mathcal{H}, \tau) \le \frac{\varepsilon'}{12\tau}$ holds for large $N$ because $1/\tau \to \infty$.

We apply the Saxton--Thomason Hypergraph Container Theorem (Theorem~\ref{thm:ST}) to the auxiliary hypergraph $\mathcal{H}$ with parameter $\varepsilon'$. This yields a collection of container vertex subsets $\mathcal{C}^* \subset \mathcal{P}(U)$ whose size satisfies the bound:
\begin{align*}
  \log |\mathcal{C}^*|& \le c(r) n \tau \log(1/\tau) \\
  &\le O(N^2 \cdot N^{-1/m_p(H)} \log N) \\
  &\le c N^{2 - 1/m_p(H)} \log N.
\end{align*}

This establishes property (c) of Theorem~\ref{thm:main}. Next, we prove properties (a) and (b).

By property (1) of Theorem~\ref{thm:ST}, every independent set in $\mathcal{H}$ is contained in some $C^* \in \mathcal{C}^*$. According to Lemma~\ref{lem:independence}, every $H$-free digraph $I$ on $[N]$ corresponds to an independent set $S_I \subseteq U$, which must therefore be contained in some $C^* \in \mathcal{C}^*$.

To complete the proof, we must decode each container set $C^* \subseteq U$. Instead of naively decoding $C^*$ into a single maximal digraph---which could erroneously force the creation of 2-cycles from isolated states and artificially inflate the edge density---we decode $C^*$ into a \emph{state profile} (or pattern graph) $\mathcal{P}_{C^*}$. For each unordered pair $e = \{u, v\}$, we define the set of permitted states $\Sigma_e \subseteq \{\emptyset, F, B, D\}$ as follows:
\begin{itemize}
    \item The empty state $\emptyset$ is always permitted.
    \item $F \in \Sigma_e$ if and only if $(e, F) \in C^*$.
    \item $B \in \Sigma_e$ if and only if $(e, B) \in C^*$.
    \item $D \in \Sigma_e$ if and only if $\{(e, F), (e, B), (e, D)\} \subseteq C^*$.
\end{itemize}

Because the true encoding $S_I$ of the $H$-free digraph $I$ satisfies the downward-closure property, if $I$ contains a 2-cycle on $e$, then $S_I$ inherently contains all three elements $(e, F), (e, B)$, and $(e, D)$. Since $S_I \subseteq C^*$, it follows that $D \in \Sigma_e$. Thus, every edge configuration of $I$ is perfectly permitted by the state profile $\mathcal{P}_{C^*}$. This guarantees lossless recovery, effectively covering all $H$-free digraphs (property (a)).

We define the generalized edge capacity of the profile $\mathcal{P}_{C^*}$ as:
\[
e_a(\mathcal{P}_{C^*}) = \sum_{e \in \binom{[N]}{2}} \max_{S \in \Sigma_e} e_a(S),
\]
where $e_a(\emptyset)=0$, $e_a(F)=1$, $e_a(B)=1$, and $e_a(D)=a$. Notice that the structural rule precisely limits the maximum capacity. If $D \notin \Sigma_e$, the capacity on $e$ is at most 1, strictly preventing the false formation of full 2-cycles.

Crucially, if a digraph drawn from $\mathcal{P}_{C^*}$ contains a copy of $H$ under an injection $\phi$, our decoding rule guarantees that $C^*$ must contain the entire associated hyperedge $E_\phi$. By property (2) of Theorem~\ref{thm:ST}, the number of induced edges satisfies $e(\mathcal{H}[C^*]) \le \varepsilon' e(\mathcal{H}) = O(\varepsilon' N^h)$. Thus, the number of copies of $H$ permitted by $\mathcal{P}_{C^*}$ is bounded by $\delta N^h$, where $\delta > 0$ can be made arbitrarily small by choosing a sufficiently small $\varepsilon'$. 

Applying the Supersaturation Lemma for Generalized Edge Count (Lemma~\ref{lem:supersat}) to the continuous limit of such profiles, we conclude that for any $\varepsilon > 0$, we can choose $\varepsilon'$ small enough such that the maximum generalized edge count satisfies $e_a(\mathcal{P}_{C^*}) \le \operatorname{ex}_a(N, H) + \varepsilon N^2$, establishing property (b). This completes the proof of Theorem~\ref{thm:main}.
\end{proof}

\section{Proof of Corollary~\ref{cor:counting}}

\begin{proof}[Proof of Corollary~\ref{cor:counting}]
For $a=2$, the generalized edge count $e_2(G)$ simply counts the total number of directed edges, so $\operatorname{ex}_2(N, H)$ is the maximum number of directed edges in an $H$-free digraph. Every $H$-free digraph $I$ is bounded within the state profile of some container $C^* \in \mathcal{C}^*$. The total number of digraphs supported by a profile $\mathcal{P}_{C^*}$ is exactly $\prod_{e} |\Sigma_e|$. Note that for any state profile, $\prod_{e} |\Sigma_e| \le 2^{e_2(\mathcal{P}_{C^*})}$.

Thus, the total number of $H$-free digraphs $f^*(N, H)$ is bounded by:
\[
f^*(N, H) \le \sum_{C^* \in \mathcal{C}^*} 2^{e_2(\mathcal{P}_{C^*})} \le |\mathcal{C}^*| \cdot 2^{\operatorname{ex}_2(N, H) + \varepsilon N^2}.
\]
Substituting the bound from property (c) of Theorem~\ref{thm:main}, we obtain:
\[
f^*(N, H) \le 2^{c N^{2-1/m_p(H)} \log N} \cdot 2^{\operatorname{ex}_2(N, H) + \varepsilon N^2}.
\]
Since $2-1/m_p(H) < 2$ for any $H$ with $p(H) \ge 2$, the container size term satisfies $2^{O(N^{2-1/m_p(H)}\log N)} = 2^{o(N^2)}$. Taking the base-2 logarithm of both sides yields:
\[
\log_2 f^*(N, H) \le \operatorname{ex}_2(N, H) + \varepsilon N^2 + o(N^2).
\]
Since this holds for any $\varepsilon > 0$, we have
\[
\lim_{N \to \infty} \frac{\log_2 f^*(N, H) - \operatorname{ex}_2(N, H)}{N^2} \le 0.
\] 
Conversely, the lower bound $\log_2 f^*(N, H) \ge \operatorname{ex}_2(N, H)$ is trivial because any subgraph of an extremal $H$-free digraph is also $H$-free. This establishes the asymptotic equivalence $\log_2 f^*(N, H) = \operatorname{ex}_2(N, H) + o(N^2)$, completing the proof.
\end{proof}

\section{Conclusions}\label{sec5}

K\"uhn, Osthus, Townsend, and Zhao \cite{KOTZ} used the hypergraph container method to determine the typical structure of oriented graphs avoiding a fixed tournament or cycle. Direct extensions of their theorem to general digraphs fail due to dense local obstructions like 2-cycles. In this paper, we completely resolve this bottleneck by introducing a pair-state encoding framework. As applications, we obtain optimal asymptotic counting results for $H$-free digraphs for any general forbidden digraph $H$. The theorem 3.3 in \cite{KOTZ} is a direct consequence of our main result (Theorem \ref{thm:main}). 

In our framework, we strictly require $p(H) \ge 2$. One might naturally wonder about the boundary case where $p(H) = 1$, which exclusively corresponds to $H$ being a single 2-cycle ($DK_2$). From a hypergraph perspective, $p(H)=1$ collapses the auxiliary hypergraph into a 1-uniform structure, rendering the container machinery inapplicable (as Saxton-Thomason requires $r \ge 2$). Combinatorially, however, this case is completely trivial: a $DK_2$-free digraph is simply an oriented graph. Since there are no inter-pair constraints, the local state space simply restricts from 4 choices to 3 ($\{\emptyset, F, B\}$) for each of the $\binom{N}{2}$ pairs, yielding exactly $3^{\binom{N}{2}}$ oriented graphs. 

Interestingly, this degenerate case breaks the standard asymptotic relationship $\log_2 f^*(N, H) \sim \operatorname{ex}_2(N, H)$, because $\log_2 3^{\binom{N}{2}} \approx 1.585 \binom{N}{2}$ strictly exceeds $\operatorname{ex}_2(N, DK_2) = \binom{N}{2}$. This demonstrates that $p(H) \ge 2$ is not merely a technical artifact of our proof, but rather the fundamental threshold where combinatorial entropy transitions from trivial local independence into complex global container structure.

\subsection*{Conflicts of Interest}
The authors declare no conflicts of interest.

\subsection*{Acknowledgments}
This work is partially funded by the Natural Science Foundation.

\end{document}